\pgfplotsset{compat=1.14}
\DeclareMathOperator{\R}{\mathbb{R}}
\DeclareMathOperator{\affine}{aff}
\DeclareMathOperator{\dist}{dist}
\DeclareMathOperator{\relint}{relint}
\DeclareMathOperator{\intt}{int}
\newcommand{\Sp}{\mathbb{S}^{d-1}}
\newcommand{\Stwo}{\mathbb{S}^2}
\let\originalleft\left
\let\originalright\right
\renewcommand{\left}{\mathopen{}\mathclose\bgroup\originalleft}
\renewcommand{\right}{\aftergroup\egroup\originalright}
\newtheorem{theo}{Theorem}
\newtheorem{cor}{Corollary}
\newtheorem{lem}{Lemma}
\theoremstyle{definition}
\newtheorem{remark}{Remark}
\title{The Maximum Surface Area Polyhedron with Five Vertices Inscribed in the Sphere $\mathbb{S}^2$}
\author{Jessica Donahue, Steven Hoehner and Ben Li}
\begin{document}
\setcounter{footnote}{0}
\maketitle
\begin{abstract}
This article focuses on the problem of analytically determining the optimal placement of  five points on the unit sphere $\mathbb{S}^2$ so that the surface area of the convex hull of the points is maximized. It is  shown that the optimal polyhedron has a trigonal bipyramidal structure with two vertices  placed at the north and south poles and the other three vertices forming an equilateral triangle inscribed in the equator. This result confirms a conjecture of Akkiraju, who conducted a numerical search for the maximizer. As an application to crystallography,  the surface area discrepancy is considered as a measure of distortion between an observed coordination polyhedron and an ideal one. The main result yields a formula for the surface area discrepancy of any coordination polyhedron with five vertices.
\end{abstract}

\renewcommand{\thefootnote}{}
\footnotetext{2020 \emph{Mathematics Subject Classification}: Primary 52A40; Secondary 52A38 \and 52B10 \and 74E15}

\footnotetext{\emph{Key words and phrases}: polyhedron, polytope, surface area, inequality, bipyramid, optimization}
\renewcommand{\thefootnote}{\arabic{footnote}}
\setcounter{footnote}{0}




\section{Introduction and Main Result}

The coordination polyhedron of a configuration of  ligand atoms on the unit sphere $\Stwo$ bonded to a central atom at the origin is a fundamental concept in crystallography \cite{Borchardt-Ott}. It is a natural question to compare the shape of an observed structure with that of a regular or ``ideal" polyhedron, since usually the distortion turns out to be attributed to the distribution of cations. Various notions of ``idealness" and  measures of distortion have been studied in crystallography. Examples include differences in bond lengths, local symmetry and bond angle strains  \cite{Robinson1971}.  The {\it volume discrepancy} $\mathscr{V}(P)$ of an observed coordination polyhedron $P$ was introduced in \cite{Makovicky} as a way to measure the distortion of  $P$  from an  ``ideal" polyhedron $\widehat{Q}_{\text{vol}}(P)$  inscribed in $\Stwo$ that  is combinatorially equivalent (see Section 3 for the definition) to $P$ and achieves the maximum volume. It was defined  as
\begin{equation}\label{volumedistortion}
   \mathscr{V}(P):= \frac{V(\widehat{Q}_{\text{vol}}(P))-V(P)}{V(\widehat{Q}_{\text{vol}}(P))}
\end{equation}
where $V(P)$ denotes the volume of $P$. The existence of the ideal polyhedron $\widehat{Q}_{\text{vol}}(P)$ follows from a compactness argument. 

The volume discrepancy has found a number of applications to crystallography. In \cite{Makovicky}, the authors analyzed several  structural families using the volume discrepancy functional. It turned out that  \eqref{volumedistortion} can be used as a global measure itself or combined with other distortion characteristics to quantify, for instance, departures from a structural archetype \cite{Makovicky1989} and the configurational driving mechanisms for phase transformations \cite{Makovicky}. 
   
In view of \eqref{volumedistortion}, it is natural to instead define idealness and measure distortion in terms of surface area. Given a coordination polyhedron $P$, let $\widehat{Q}_{\text{SA}}(P)$ denote a combinatorially equivalent polyhedron inscribed in $\Stwo$ that achieves the maximum surface area. It follows again by a compactness argument that the ideal polyhedron $\widehat{Q}_{\text{SA}}(P)$ exists. Thus,  one may define the {\it surface area discrepancy} $\mathscr{S}(P)$ of the coordination polyhedron $P$ by
\begin{equation}\label{SAdiscrepancy}
   \mathscr{S}(P):= \frac{S(\widehat{Q}_{\text{SA}}(P))-S(P)}{S(\widehat{Q}_{\text{SA}}(P))}
\end{equation}
where $S(P)$ denotes the surface area of $P$. 

The surface area maximizers among {\it all} polyhedra with $v$ vertices inscribed in $\Stwo$ have been determined analytically for the cases $v=4,6$ and $12$ \cite{Heppes, Krammer, Toth}; see the discussion in Section \ref{relatedresults}. These three cases are exceptional because there exists a regular polyhedron whose facets are congruent equilateral triangles. Using this property and a moment of inertia formula (e.g., \cite{BalkBoltyanskii}), it follows that the global surface area maximizers for $v=4,6,12$ are the regular tetrahedron, octahedron and icosahedron, respectively.  In the case $v=5$, however, there is no regular polyhedron  inscribed in the sphere, and the aforementioned moment of inertia formula yields a strict inequality.  Nevertheless, one may expect the optimizer to be a triangular bipyramid since this structure exhibits the highest degree of symmetry among all five point configurations on the sphere. Akkiraju \cite{akkiraju} conducted a numerical search for the global maximizer with five vertices, and asked for a proof that it is the triangular bipyramid  with two vertices at the north and south poles and three more forming an equilateral triangle in the equator.  To the best of our knowledge, however, a proof was missing until now. In our main result we close this gap and provide an affirmative answer to Akkiraju's question. 
To state the result, let $\bm{e}_1,\bm{e}_2,\bm{e}_3$ denote the standard basis vectors of $\R^3$. An illustration of the global surface area maximizer is shown in Figure 1 below.

\begin{center}
\tdplotsetmaincoords{80}{90}
\def\r{1}
  \begin{tikzpicture}[scale=3,line join=bevel, tdplot_main_coords]
    \coordinate (O) at (0,0,0);

\coordinate (A) at (1,0,0);
\coordinate (B) at ({-1/2},{sqrt(3)/2},0);
\coordinate (C) at ({-1/2},{-sqrt(3)/2},0);
\coordinate (D) at (0,0,1);
\coordinate (E) at (0,0,{-1});

\begin{scope}[thick]
    \draw (A) -- (D)--(B);
    \draw (A) -- (D) -- (C);
    \draw (A) -- (B)--(E);
    \draw (A)--(C)--(E);
    \draw (A)--(E)--(B);
\end{scope}

\draw[thick,fill=green, opacity=0.2] (A) -- (D)--(B);
\draw[thick,fill=green, opacity=0.2] (A) -- (D) -- (C);
\draw[thick,fill=green,opacity=0.2](A) -- (C) -- (E);  
\draw[thick,fill=green,opacity=0.2] (A)--(E)--(B);  

\begin{scope}[dashed] 
    \draw (C) -- (B);
\end{scope}

\begin{scope}[opacity=0.8]
\draw[tdplot_screen_coords] (0,0,0) circle (\r);
\tdplotCsDrawLatCircle{\r}{0}
\end{scope} 

\filldraw[black] (0,0,0) circle (0.25pt) node[anchor=east] {$\mathbf{0}$};
\filldraw[black] (0,0,1) circle (0.25pt) node[anchor=south] {$\bm{e}_3$};
\filldraw[black] (0,0,-1) circle (0.25pt) node[anchor=north] {$-\bm{e}_3$};
\filldraw[black] (1,0,0) circle (0.25pt) node[anchor=north west] {$\bm{e}_1$};
\filldraw[black] (B) circle (0.25pt) node[anchor=south] {$\bm{\zeta}_1$};
\filldraw[black] (C) circle (0.25pt) node[anchor=south] {$\bm{\zeta}_2$};

  \end{tikzpicture}
  
  {\footnotesize {\bf Figure 1}: The maximum surface area polyhedron with 5 vertices is the convex hull of the north and south poles $\pm \bm{e}_3$ and an equilateral triangle inscribed in the equator with vertices $\bm{e}_1,\bm{\zeta}_1$ and $\bm{\zeta}_2$.}
    \end{center}

\begin{theo}\label{mainThm}
Let $P$ be the convex hull of five points chosen from the unit sphere $\Stwo$. Then
\begin{equation}
S(P) \leq \frac{3\sqrt{15}}{2}=5.809475\ldots
\end{equation}
with equality if and only if $P$ is a rotation of the  triangular bipyramid with vertices $\bm{e}_3$, $-\bm{e}_3$, $\bm{e}_1$, $\bm{\zeta}_1:=(-\frac{1}{2}, \frac{\sqrt{3}}{2},0)$ and $\bm{\zeta}_2:=(-\frac{1}{2}, -\frac{\sqrt{3}}{2},0)$. 
\end{theo}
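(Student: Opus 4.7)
The plan is as follows. By compactness of $(\Stwo)^5$ and continuity of the surface area functional $S$, a maximizer $P^*$ exists. First, $P^*$ must have all five points as vertices: otherwise the convex hull has at most four vertices, and the corresponding surface area maximum (the regular tetrahedron) equals $8\sqrt{3}/3 \approx 4.62 < 3\sqrt{15}/2$. Second, $P^*$ is combinatorially a triangular bipyramid; the only alternative type for five points in convex position on $\Stwo$ is a square pyramid with four coplanar vertices, which I would exclude by showing that an appropriate off-plane perturbation of one of the four coplanar vertices strictly increases the surface area. Thus the two apex vertices $u, v \in \Stwo$ lie on opposite sides of the plane through the equator triangle $T = \conv(a, b, c)$. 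Writing $A(p, x, y)$ for the Euclidean area of the triangle with vertices $p, x, y$,
\begin{equation*}
S(P) = L_T(u) + L_T(v), \qquad L_T(w) := A(w, a, b) + A(w, b, c) + A(w, c, a).
\end{equation*}

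Next I would write the first-order Lagrange conditions at $P^*$, based on the elementary gradient formula $\nabla_p A(p, x, y) = \tfrac{1}{2}|x - y|\, \hat{n}_{xy}(p)$, where $\hat{n}_{xy}(p)$ is the in-plane unit vector perpendicular to the edge $xy$ directed toward $p$. Stationarity of $S$ at each vertex $p_i$ (restricted to $\Stwo$) yields five vector equations, each stating that a weighted sum of such in-plane normals is parallel to $p_i$. Choosing coordinates so that the plane of $T$ is horizontal with the circumcenter of $T$ on the $z$-axis, I would use these five equations to establish in sequence: (a) the two apexes are antipodal and lie on the $z$-axis, so $\{u, v\} = \{\bm{e}_3, -\bm{e}_3\}$; (b) the plane of $T$ passes through the origin, so $T$ is inscribed in a great circle of $\Stwo$; and (c) $T$ is equilateral. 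A direct computation then gives $S(P^*) = 3\sqrt{15}/2$.

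The principal obstacle is conclusion (a). A naive strategy would be to fix the triangle $T$ and optimize each apex separately, then conclude that each optimal apex lies on the perpendicular axis through the circumcenter of $T$. However, this separate optimization is misleading: for a generic (non-equilateral) fixed $T$, the tent functional $L_T(w)$ on the hemisphere above $T$ is not maximized at the pole of the plane of $T$---its supremum is instead approached as $w$ degenerates into the plane of $T$. Axial symmetry therefore emerges only from the \emph{joint} optimality of $T$, $u$, and $v$. To carry out (a)--(c) I would combine the Lagrange conditions at all five vertices, eliminate the multipliers to obtain polynomial relations in the vertex coordinates, and then exploit the interchangeability of the roles of $a, b, c$ and of $u, v$ to isolate the unique symmetric critical point. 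A final boundary check---vertex coincidences and the degenerate combinatorial types excluded at the outset---then confirms that $3\sqrt{15}/2$ is the global maximum.
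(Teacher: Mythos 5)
Your overall architecture (reduce to the two combinatorial types of five-vertex polyhedra, then treat each) matches the paper's, but the two steps you leave as plans are exactly where the difficulty lies, and in both cases the plan has a genuine gap. First, you propose to dismiss the square-pyramid type by showing that an off-plane perturbation of one of the four coplanar vertices strictly increases $S$. The paper instead computes the exact optimum over that class (Corollary \ref{BestPyr}): a right pyramid of height $\eta\approx 1.2622$ with surface area $\approx 5.77886$, which is within about $0.5\%$ of $3\sqrt{15}/2\approx 5.80948$. This configuration is the numerically observed local optimum in Akkiraju's work, and you offer no computation showing that any admissible perturbation increases $S$; the narrowness of the gap is strong evidence that no soft local argument can work and that a quantitative comparison of the two class maxima is unavoidable. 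Second, for the bipyramid class you correctly diagnose the obstruction (for a fixed non-equilateral triangle $T$ the tent functional $L_T$ is not maximized at the pole, so the apexes cannot be optimized separately), but your proposed cure --- write the first-order system at all five vertices, eliminate the multipliers, and ``isolate the unique symmetric critical point'' --- is not an argument. The $S_3\times S_2$ symmetry of the functional does not force critical points to be symmetric, and you give no mechanism for enumerating or excluding asymmetric critical points, nor for comparing their values; this is the entire content of the problem, not a routine elimination.

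For comparison, the paper resolves precisely this joint-optimality difficulty with Lemma \ref{fix_h:max_lateralSA}: slice the bipyramid by the plane $H(h)$ through its three equatorial vertices, and for each fixed $h$ maximize the lateral surface area of each of the two resulting pyramids over all pyramids contained in the corresponding cap with base vertices in the base of the cap. A KKT argument with the base area as a released constraint forces the foot-lengths $p_i$ to be equal, after which the perimeter, inradius and height bounds are saturated simultaneously by the regular base inscribed in the boundary circle of $H(h)\cap\mathbb{B}_2^3$ with apex at the pole of the cap; crucially both half-pyramids are optimized by the \emph{same} base, so they can be maximized simultaneously, and a one-variable optimization over $h$ then gives $h=1$. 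To complete your proposal you would need either to carry out a constrained sub-optimization of this kind or to supply both the missing quantitative bound for the square pyramid and a genuine analysis of the critical-point system for the bipyramid; as written, both halves are gaps.
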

\noindent 

It follows that if $P$ is an observed polyhedron that is combinatorially equivalent to a triangular bipyramid, then
\begin{equation}\label{SAdiscform}
\mathscr{S}(P) = 1-\left(\frac{2}{3\sqrt{15}}\right)S(P).
\end{equation} 
In Section \ref{proofsection}, we derive a similar formula for the surface area discrepancy of an observed coordination polyhedron that is combinatorially equivalent to a square pyramidal structure. We then use this result to show that the volume and surface area discrepancies are not always equivalent. 

The problem of maximizing the volume or surface area of inscribed polyhedra also has applications to quantum theory. In this setting, every polyhedron inscribed in \(\Stwo\) with \(v\) vertices serves as a unique geometric representation of a pure symmetric state of a \(v\)-qubit system (see, e.g., \cite{Majorep, Kazakov}). For instance, the GHZ type state on a \(v\)-qubit  system corresponds to a (planar) regular $v$-polygon inscribed in \(\Stwo\). Such a polyhedron is called a {\it Majorana polyhedron (representation)}. For a pure symmetric state, the surface area of its Majorana polyhedron was proposed to  be a new measure of its entanglement \cite{Kazakov}. It was also conjectured  that this new measure is equivalent to the entropy measure of entanglement \cite{Kazakov}. This is certainly true for tripartite qubit-systems. Assuming the conjecture holds, our main result leads to an explicit five-partite state that bears maximal entanglement.

The rest of the paper is outlined as follows. In Section \ref{relatedresults} we describe the known results that are most closely related to Theorem \ref{mainThm}. In Section \ref{notations} we state the definitions and notations used throughout the paper. The proof of Theorem \ref{mainThm} is given in Section \ref{proofsection}. Next, in Section \ref{comparison} we compare the volume and surface area discrepancies for the classes of $v$-pyramids and $v$-bipyramids, showing they are distinct in the former and equivalent in the latter. Finally, in Section \ref{Problems} we summarize our results and  discuss some related open problems.

 \section{Related Results}\label{relatedresults}

We will now briefly discuss the known results in convex and discrete geometry that are most closely related to Theorem \ref{mainThm}. Let  $P$ be  a polyhedron in $\R^3$ with $v$ vertices, $e$ edges and $f$ facets. Assume that the vertices of $P$ lie in the unit sphere $\Stwo$, and that $P$ satisfies the ``foot condition" in which the feet of the perpendiculars  from the circumcenter (or incenter, respectively) of $P$ to its facet-planes and edge-lines lie on the corresponding facets and edges. 
By a  result of L. Fejes T\'oth \cite{Toth} (see also \cite{Toth1950}, Theorem 2, p. 279), the surface area of such a polyhedron is bounded by
\begin{equation}\label{tothleq}
S(P) \leq e\sin\frac{\pi f}{e}\left(1-\cot^2\frac{\pi f}{2e}\cot^2\frac{\pi v}{2e}\right).
\end{equation}
Equality holds only for the regular polyhedra. Linhart \cite{Linhart} later proved that the condition on the facets is superfluous. It follows from \eqref{tothleq} that if $P$ is a polyhedron  with $v$ vertices inscribed in $\Stwo$ that satisfies the foot condition, then
\begin{equation}\label{florianleq}
    S(P) \leq \frac{3\sqrt{3}}{2}(v-2)\left(1-\frac{1}{3}\cot^2\frac{\pi v}{6(v-2)}\right)=:G(v)
\end{equation}
with equality if and only if $v=4,6$ or $12$ and $P$ is the regular tetrahedron, regular octahedron or regular icosahedron, respectively (see, e.g., Remark 1 in \cite{HorvathIcosahedron}). This result is remarkable as there are precisely  6,384,634 combinatorial types of polyhedra with 12 vertices (see \cite{DF1981} and the references therein)! 

To apply \eqref{florianleq}, we must show that any surface area maximizer satisfies the foot condition for the edges. Suppose that $Q_v^*$ is the polyhedron that achieves the maximum surface area among all polyhedra with at most $v$ vertices inscribed in the Euclidean unit ball $\mathbb{B}_2^3=\{\bm{x}\in\R^3: \|\bm{x}\|_2\leq 1\}$, where $\|\bm{x}\|_2=\sqrt{x_1^2+x_2^2+x_3^2}$ denotes the Euclidean norm of $\bm{x}=(x_1,x_2,x_3)\in\R^3$. Then  $Q_v^*$ has exactly $v$ vertices, and all vertices of $Q_v^*$ lie in the boundary of $\mathbb{B}_2^3$, namely, the unit sphere $\Stwo=\{\bm{x}\in\R^3: \|\bm{x}\|_2=1\}$. By continuity, for any edge $E$ of $P$ there exists a point $\bm{x}^*$ in the affine hull $\affine(E)$ of $E$ such that $\|\bm{x}^*\|_2$ is minimal. Since all vertices of $Q_v^*$ lie in the boundary we have $\affine(E)\cap\mathbb{B}_2^3=E$, which contains $\bm{x}^*$. This shows that the surface area maximizer satisfies the foot condition.  It now follows from \eqref{florianleq} that the regular tetrahedron, octahedron and icosahedron are the unique  surface area maximizers (up to rotations) for $v=4,6$ and 12, respectively. Prior to the work  \cite{Toth}, the case $v=4$ of the tetrahedron was  settled contemporaneously in \cite{Heppes} and  \cite{Krammer}. More generally, a  result in \cite{tanner} implies that among all simplices inscribed in the unit sphere $\mathbb{S}^{d-1}$ in $\R^d$, $d\geq 2$, the regular simplex has maximal surface area (a {\it simplex} in $\R^d$ is the convex hull of $d+1$ affinely independent points).  For more background on the known results in this area, we refer the reader to, e.g., \cite{HorvathLangi, MPVbook}.

We now turn our attention to the problem considered in this article. By \eqref{florianleq},
\[
G(5) = \frac{9\sqrt{3}}{2}\left(1-\frac{1}{3}\cot^2\frac{5\pi}{18}\right)\approx 5.96495.
\]
On the other hand, the  triangular bipyramid in Theorem \ref{mainThm} satisfies the foot condition and has surface area
\[\frac{3\sqrt{15}}{2}\approx 5.809<G(5).\] 
Thus, a strict inequality holds in \eqref{florianleq} for the triangular bipyramid  in Theorem \ref{mainThm}, so we need a new argument to prove that it  is the maximizer.  The main step is to determine the $v$-pyramid (definition below) of maximum lateral surface area contained in a cap of the sphere, assuming the base of the pyramid lies in the base of the cap. Theorem \ref{mainThm} then follows by applying this result to each  combinatorial type of polyhedron with $v=5$ vertices.

In the following table, we summarize the aforementioned cases where the global surface area maximizers have been explicitly determined.  For fixed $v\geq 4$, recall that $Q_v^*$ denotes the maximum surface area polyhedron with  $v$ vertices inscribed in $\Stwo$. In the last column, we give the surface area discrepancy $\mathscr{S}(P)$ of an observed coordination polyhedron $P$ that is combinatorially equivalent to the maximizer. Please note that a  table of $G(v)$ values can also be found in \cite{akkiraju}.   

\vspace{3mm}

\begin{table}
\caption{List of maximum surface area polyhedra with $v\leq 12$ vertices inscribed in $\Stwo$.}
\begin{center}
    \begin{tabular}{ccccc}
    $v$  & $G(v)$ & $S(Q_v^*)$ & Maximizer $Q_v^*$ & $\mathscr{S}(P)$\\ \hline
    4  & $8/\sqrt{3}\approx 4.62$ & $8/\sqrt{3}$ & regular tetrahedron & $1-\tfrac{\sqrt{3}}{8}S(P)$\\ \hline
    5   & $\approx 5.96$ & $3\sqrt{15}/2$ & triangular bipyramid & $1-\tfrac{2}{3\sqrt{15}}S(P)$\\ \hline
    6     & $4\sqrt{3}\approx 6.93$ & $4\sqrt{3}$ & regular octahedron & $1-\tfrac{1}{4\sqrt{3}}S(P)$\\ \hline
    7   & $\approx 7.65$ & -- & -- & --\\ \hline
    8  &  $\approx 8.21$ & -- & -- & --\\\hline
    9  & $\approx 8.65$ & -- & -- & --\\ \hline
    10 & $\approx 9.02$ & -- & -- & --\\ \hline
    11  &  $\approx 9.32$ & -- & -- & --\\ \hline
    12   & $2\sqrt{75}-2\sqrt{15}\approx 9.57$ & $2\sqrt{75}-2\sqrt{15}$ & regular icosahedron & $1-\tfrac{S(P)}{2\sqrt{75}-2\sqrt{15}}$
    \end{tabular}
    \end{center}
\end{table}

\section{Definitions and Notations}\label{notations}

We shall work in three-dimensional space $\R^3$ with standard basis $\{\bm{e}_1,\bm{e}_2,\bm{e}_3\}$ and inner product   $\langle\bm{x},\bm{y}\rangle=\sum_{i=1}^3 x_i y_i$, where $\bm{x},\bm{y}\in\R^3$. The Euclidean norm of $\bm{x}\in\R^3$ is $\|\bm{x}\|_2=\sqrt{\langle\bm{x},\bm{x}\rangle}=\sqrt{\sum_{i=1}^3 x_i^2}$. The Euclidean unit ball in $\R^3$ centered at the origin $\mathbf{0}$ is denoted by $\mathbb{B}_2^3=\{\bm{x}\in\R^3: \|\bm{x}\|_2\leq 1\}$. Its boundary is the unit sphere $\Stwo=\{\bm{x}\in\R^3: \|\bm{x}\|_2=1\}$. The distance from a point $\bm{x}\in\R^3$ to a closed set $A\subset\R^3$ is  $\dist(\bm{x},A)=\min_{\bm{y}\in A}\|\bm{x}-\bm{y}\|_2$. 

For a two-dimensional plane $H=H(\bm{u},b)=\{\bm{x}\in\R^3: \langle \bm{x},\bm{u}\rangle = b\}$ in $\R^3$ (where $\bm{u}\in\mathbb{S}^2$ and $b\in\R$),  we let $H^+=\{\bm{x}\in\R^3: \langle \bm{x},\bm{u}\rangle \geq b\}$ and $H^-=\{\bm{x}\in\R^3: \langle \bm{x},\bm{u}\rangle \leq b\}$ denote its two closed halfspaces (a plane in $\R^2$ and its two closed halfspaces  are defined analogously). The orthogonal complement $\bm{x}^\perp$ of a vector $\bm{x}\in\R^3$ is the set $\bm{x}^\perp=\{\bm{y}\in\R^3: \langle \bm{x},\bm{y}\rangle=0\}$.  The affine hull and the interior of a set $A\subset\R^3$ are denoted by $\affine(A)$ and $\intt(A)$, respectively. The interior of $A$ within its affine hull  is  called the {\it relative interior} of $A$ and is denoted by $\relint(A)$. 

A set $C\subset\R^3$ is {\it convex} if for any $\bm{x},\bm{y}\in C$, the line segment $[\bm{x},\bm{y}]$ with endpoints $\bm{x}$ and $\bm{y}$ is contained in $C$. The {\it convex hull} of a set $A\subset\R^3$ is the smallest convex set that contains $A$ with respect to set inclusion. For $v\geq 2$, we denote the convex hull of points $\bm{x}_1,\ldots,\bm{x}_v\in\R^3$ by $[\bm{x}_1,\ldots,\bm{x}_v]$. A {\it polyhedron} in $\R^3$ is the (possibly unbounded) intersection of a finite collection of closed halfspaces. Henceforth, we shall only consider bounded polyhedra with nonempty interior. It is well-known that every bounded polyhedron can be expressed as the convex hull of a finite set of points and vice versa (e.g., \cite{Grunbaum}). For $v\geq 3$, we say that  $P\subset\R^3$ is a  $v${\it-gon} if it is the convex hull of $v$ coplanar points  and $P$ has $v$ extreme points. 
For $v\geq 4$, we say  that $P$ is a {\it $v$-pyramid} if it is the convex hull of the union of  $(v-1)$-gon  $Q$ and an apex point $\bm{a}\not\in\affine(Q)$.  For $v\geq 5$, let $Q$ be a $(v-2)$-gon and let $I$ be a closed segment that intersects $Q$ in a single point lying in $\relint(Q)\cap\relint(I)$. The convex hull of $Q\cup I$ is called a $v${\it-bipyramid}. 
For more  background on polyhedra and convex sets, we refer the reader to, e.g., \cite{Brondsted, Grunbaum, ZieglerBook} and \cite{GruberBook, SchneiderBook}, respectively.

A {\it face} of a polyhedron (or $v$-gon) $P$ is the intersection of $P$ with a   support plane $H$ of $P$ (meaning $H$ has codimension 1, $H\cap P\neq\varnothing$ and $P\subset H^+$ or $P\subset H^-$). The faces of $P$ of dimension 0 and 1 are called {\it vertices} and {\it edges}, respectively, and the faces of $P$ of codimension 1 are called {\it facets}. 
Two polyhedra $P$ and $Q$ are {\it combinatorially equivalent} (or of the same {\it combinatorial type}) if there exists a bijection $\varphi$ between the set $\{F\}$ of all faces of $P$ and the set $\{G\}$ of all faces of $Q$ that preserves inclusions, i.e., for any two faces $F_1,F_2\in\{F\}$, the inclusion $F_1\subset F_2$ holds if and only if $\varphi(F_1)\subset\varphi(F_2)$ holds. For references on the enumeration and number of combinatorial types of polyhedra in $\R^3$ with a small number of vertices, see, e.g., \cite{BrittonDunitz1973, federico}.

     \section{Proof of Theorem \ref{mainThm}}\label{proofsection}

We will employ the method of partial variation of P\'olya \cite{Polya1954} (see also Sect. 1.4 in \cite{GeomMaxMinBook}). It says that if the function $f(x_1,\ldots,x_n)$ has a maximum (resp. minimum) at $(x_1,\ldots,x_n)=(a_1,\ldots,a_n)$, then for any $1\leq k\leq n-1$ the function $g(x_{k+1},\ldots,x_n)=f(a_1,\ldots,a_k,x_{k+1},\ldots,x_n)$ has a maximum (resp. minimum) at $(x_{k+1},\ldots,x_n)=(a_{k+1},\ldots,a_n)$. 

The proof of Theorem \ref{mainThm} requires several lemmas. The first ingredient is classical (see, e.g., \cite{DF1981} and the references therein).  
\begin{lem}\label{mainLem1}
There are precisely two combinatorial types of polyhedra with 5 vertices: 5-pyramids and 5-bipyramids.
\end{lem}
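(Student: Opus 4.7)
The plan is to apply Euler's formula together with the standard incidence inequalities to cut the classification down to two finite cases, then show each case produces exactly one of the claimed combinatorial types. Substituting $v = 5$ into $v - e + f = 2$ gives $e = f + 3$. Since every facet has at least three edges, $2e \geq 3f$, which forces $f \leq 6$; since every vertex of a bounded 3-polyhedron has degree at least three, $2e \geq 3v = 15$, which forces $e \geq 8$ and hence $f \geq 5$. So only the pairs $(e, f) = (8, 5)$ and $(9, 6)$ survive.

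In the case $(e, f) = (8, 5)$, let $n_k$ denote the number of $k$-gonal facets of $P$. The identities $\sum_k n_k = 5$ and $\sum_k k\, n_k = 2e = 16$ combine to give $\sum_{k \geq 4}(k - 3)n_k = 1$, so $P$ has exactly one quadrilateral facet $Q$ and four triangular facets. The fifth vertex $\bm{a}$ must lie off $\affine(Q)$, and since each edge of $Q$ belongs to exactly one triangular facet whose third vertex is forced to be $\bm{a}$, the polyhedron equals $\conv(Q \cup \{\bm{a}\})$, which is a 5-pyramid in the sense of the paper's definition.

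In the case $(e, f) = (9, 6)$, the identity $2e = 18 = 3 \cdot 6$ forces every facet to be a triangle. Since the 1-skeleton of $P$ is a simple graph on five vertices, every vertex has degree at most $v - 1 = 4$; as degrees are at least 3 and sum to 18, the only feasible degree sequence is $(4, 4, 4, 3, 3)$. I would then argue that the two degree-3 vertices cannot be adjacent---were they, elementary neighbor counting would prevent at least one degree-4 vertex from actually reaching degree 4---so each degree-3 vertex joins all three degree-4 vertices, and those three mutually connect. This recovers the 1-skeleton of a triangular bipyramid, and combined with the triangular facets identifies $P$ combinatorially as a 5-bipyramid. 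The only step requiring genuine (if mild) care is this last uniqueness check in the $(9, 6)$ case; the rest is bookkeeping with Euler's formula and elementary counting.
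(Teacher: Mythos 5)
Your argument is correct, but note that the paper does not prove this lemma at all: it records the statement as classical and simply cites the literature (the reference to \cite{DF1981} and the enumeration references in Section 3). So what you have supplied is a self-contained elementary proof where the authors rely on a citation. The Euler-formula bookkeeping is right: $e=f+3$ together with $2e\ge 3f$ and $2e\ge 3v=15$ leaves only $(e,f)=(8,5)$ and $(9,6)$, and the facet-count identity $\sum_{k\ge 4}(k-3)n_k=1$ correctly isolates the single quadrilateral facet in the first case. Your identification of the third vertex of each lateral triangle as the apex is sound, since a triangle on three vertices of the quadrilateral facet would be coplanar with it and hence could not be a distinct facet. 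Two small points in the $(9,6)$ case deserve to be made explicit. First, the non-adjacency of the two degree-$3$ vertices is cleanest via the complement: the graph has $9$ of the $\binom{5}{2}=10$ possible edges, a degree-$4$ vertex is adjacent to everything, and each degree-$3$ vertex misses exactly one neighbor, so the unique missing edge must join the two degree-$3$ vertices; your ``neighbor counting'' sketch is the same fact but should be spelled out. Second, passing from ``the $1$-skeleton is $K_5$ minus an edge and every facet is a triangle'' to the combinatorial type requires either an appeal to the uniqueness of the face lattice of a $3$-polytope given its graph (Steinitz--Whitney) or a direct check that exactly six of the seven triangles in this graph are facets (the central triangle on the three degree-$4$ vertices is the equatorial cross-section, not a facet). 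With those two sentences added, your proof is complete and is a perfectly good substitute for the paper's citation.
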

\noindent Thus, to prove Theorem \ref{mainThm} it suffices to  determine the ideal 5-pyramid and the ideal 5-bipyramid, and then  compare their surface areas. We optimize over each of the two combinatorial classes by first deriving a necessary geometric condition that  the maximizer must satisfy, which allows us to exclude most polyhedra from consideration.

\vspace{2mm}

The next ingredient is also well-known (e.g., \cite{FlorianExtremum}). 

\begin{lem}\label{k-gon maximal perim}
	Let \(k\ge 3\). Among all convex $k$-gons inscribed in a circle of radius \(R\), the regular k-gon has maximal perimeter \(2kR\sin\frac{\pi}{k}\) and maximal area \( \frac{1}{2}kR^2\sin\frac{2\pi}{k}\). 
\end{lem}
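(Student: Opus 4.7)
The plan is to parametrize any convex $k$-gon inscribed in the circle by the $k$ central angles $\alpha_1,\ldots,\alpha_k > 0$ subtended by its consecutive sides, with $\sum_{i=1}^k \alpha_i = 2\pi$. The $i$-th chord then has length $2R\sin(\alpha_i/2)$, so the perimeter equals $L = 2R\sum_i \sin(\alpha_i/2)$; the shoelace formula applied with the origin at the center gives the area $A = \tfrac{R^2}{2}\sum_i \sin\alpha_i$. Both problems thus reduce to optimizing a symmetric sum over the simplex $\{\alpha \in \R_{\ge 0}^k : \sum_i \alpha_i = 2\pi\}$.

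For the perimeter I would apply Jensen's inequality directly. Each half-angle $\alpha_i/2$ lies in $(0,\pi)$, where $\sin$ is concave, so $\sum_i \sin(\alpha_i/2) \le k\sin(\pi/k)$ with equality iff all $\alpha_i = 2\pi/k$, which yields $L \le 2kR\sin(\pi/k)$.

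The area step is the main obstacle, because $\sin$ is not concave on $(0, 2\pi)$: an individual arc could exceed $\pi$, contributing a negative $\sin\alpha_i$, so Jensen does not apply directly. My plan is a pairwise equalization argument. For any $i \ne j$, the constraint together with positivity of the remaining $k - 2 \ge 1$ arcs forces $\alpha_i + \alpha_j < 2\pi$, hence $\sin\bigl((\alpha_i+\alpha_j)/2\bigr) > 0$, and the sum-to-product identity gives
\[
\sin\alpha_i + \sin\alpha_j \;=\; 2\sin\!\left(\tfrac{\alpha_i+\alpha_j}{2}\right)\cos\!\left(\tfrac{\alpha_i-\alpha_j}{2}\right) \;\le\; 2\sin\!\left(\tfrac{\alpha_i+\alpha_j}{2}\right),
\]
with equality only when $\alpha_i = \alpha_j$. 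Thus replacing any unequal pair by its arithmetic mean strictly increases $\sum_i \sin\alpha_i$ while preserving the constraint. By compactness the maximum on the closed simplex is attained: at an interior maximizer all $\alpha_i$ must coincide, and any boundary maximizer (some $\alpha_i = 0$) reduces to a polygon with at most $k-1$ vertices, whose area is strictly smaller thanks to the elementary monotonicity $k\sin(2\pi/k) > (k-1)\sin(2\pi/(k-1))$ for $k \ge 4$ (with $k=3$ serving as the immediate base case). This forces $\alpha_i = 2\pi/k$ at the unique maximizer, yielding $A \le \tfrac{kR^2}{2}\sin(2\pi/k)$ with equality only for the regular $k$-gon.
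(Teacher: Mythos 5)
Your argument is correct. Note, however, that the paper does not prove this lemma at all: it is stated as a classical fact with a citation to the literature (Fejes T\'oth/Florian-type results on extremal polygons), so there is no in-paper proof to compare against. Your write-up is a sound, self-contained version of the standard argument: the central-angle parametrization reduces both claims to optimizing $\sum_i\sin(\alpha_i/2)$ and $\sum_i\sin\alpha_i$ over the simplex $\sum_i\alpha_i=2\pi$, Jensen handles the perimeter since $\sin$ is strictly concave on $(0,\pi)$, and you correctly identify that the area case needs more care because an arc may exceed $\pi$ (the shoelace sum of signed triangle areas still gives $\tfrac{R^2}{2}\sum_i\sin\alpha_i$ in that case, which you implicitly and correctly rely on). The sum-to-product equalization step is valid, and the equality characterization comes out of the strict inequality $\cos\bigl(\tfrac{\alpha_i-\alpha_j}{2}\bigr)<1$ for $\alpha_i\neq\alpha_j$. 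One small simplification: your boundary case does not really need the monotonicity of $k\sin(2\pi/k)$ together with an induction, since the same pairwise averaging applies when one of the two angles is $0$ (the pair still satisfies $\alpha_i+\alpha_j<2\pi$), so no point of the boundary of the simplex other than the all-equal point can be a maximizer of $\sum_i\sin\alpha_i$ in the first place. Either way, the proof is complete and would serve as a legitimate substitute for the citation.
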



The main step in the proof of Theorem \ref{mainThm} is the following lemma, which gives a necessary condition for the maximizer.

\begin{lem}\label{fix_h:max_lateralSA}
Let $P$ be a $v$-pyramid contained in a cap of $\mathbb{B}_2^3$ of height $h$ such that the base vertices of $P$ lie in the base of the cap. Suppose also that the projection of the apex of $P$ lies in the base $(v-1)$-gon. Then the lateral surface area of $P$ is maximized if and only if $P$ has height $h$ and the base is a regular $(v-1)$-gon inscribed in the boundary of the base of the cap.
\end{lem}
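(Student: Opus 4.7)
The plan is to apply the method of partial variation, exploiting convexity of distance-to-a-line functions at multiple stages. Set coordinates so that the cap lies above the base plane $\Pi = \{x_3 = 1-h\}$, with base disc $D$ of radius $r_0 = \sqrt{h(2-h)}$ centered at $\bm{o} = (0, 0, 1-h)$ and top point $\bm{e}_3$. Write $P = \conv\{\bm{a}, \bm{v}_1, \ldots, \bm{v}_{v-1}\}$ with $\bm{v}_i \in D$ and apex $\bm{a}$ in the cap, and decompose
\[
S_L(P) = \sum_{k=1}^{v-1} \tfrac{1}{2}\|\bm{v}_{k+1}-\bm{v}_k\|\cdot \dist(\bm{a}, \affine([\bm{v}_k, \bm{v}_{k+1}])).
\]
Each summand is a positive multiple of a distance-to-a-line function, so it is convex in $\bm{a}$ (for fixed base) and convex in any single $\bm{v}_i$ (for fixed apex and fixed other vertices).

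First I would fix the base and vary $\bm{a}$: convexity of $S_L$ in $\bm{a}$ forces the maximum over the convex cap to be attained at an extreme point, i.e., on the spherical cap surface $\{\bm{a}\in\Stwo : a_3 \geq 1-h\}$. Next, fix $\bm{a}$ and all base vertices except $\bm{v}_i$: the $\bm{v}_i$-dependent part of $S_L$ is the sum of two convex distance functions, so $\bm{v}_i$ must lie at an extreme point of its feasible region in $D$, which forces $\bm{v}_i \in \partial D$. Iterating over $i$ places every base vertex on $\partial D$.

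With the apex on $\Stwo$ and all base vertices on $\partial D$, parametrize the base by central angles $\theta_k$ summing to $2\pi$, and the apex by spherical coordinates. When $\bm{a} = \bm{e}_3$, the sphere condition on base vertices gives $r_0^2 + h^2 = 2h$, and the Pythagorean theorem yields
\[
S_L = r_0 \sum_{k=1}^{v-1} g(\theta_k), \qquad g(\theta) := \sin(\theta/2)\sqrt{h^2 + r_0^2 \cos^2(\theta/2)}.
\]
Maximizing $\sum g(\theta_k)$ subject to $\sum\theta_k = 2\pi$ should identify the regular $(v-1)$-gon as the optimal base. Finally, to exclude apex $\neq \bm{e}_3$, use the rotational symmetry about the $x_3$-axis to reduce to $\bm{a} = (\sin\phi, 0, \cos\phi)$ with $\phi \in [0, \arccos(1-h)]$, then show by a derivative/perturbation argument that only $\phi = 0$ attains the maximum.

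The main obstacle is the last step, since $g$ is not globally concave on $(0,\pi)$: the sign of $g''(\theta)$ reduces (after algebra using $r_0^2 + h^2 = 2h$) to that of $(\cos\theta - 1)[h^4 + r_0^4(1+\cos\theta)^2 + h^2 r_0^2(3\cos\theta+2)]$, which is negative at $\theta = 2\pi/(v-1)$ for $v \geq 4$ but can change sign elsewhere. Jensen's inequality therefore does not apply directly; one must instead work with the Lagrangian optimality system and rule out asymmetric critical points by a direct comparison or case analysis of the level sets of $g'$. The off-axis apex case requires similar care, as shifting $\bm{a}'$ away from $\bm{o}$ produces unequal apothems and breaks the symmetry that drives the base optimization.
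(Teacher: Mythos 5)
Your reduction to the boundary (apex on the spherical part of the cap by convexity of $S_L$ in $\bm{a}$, base vertices on $\partial D$ by convexity in each $\bm{v}_i$) is sound, but what remains after that reduction is precisely the hard part of the lemma, and your proposal leaves it open. Concretely, there are two unresolved gaps that you yourself flag: (1) the constrained maximization of $\sum_k g(\theta_k)$ over $\sum_k\theta_k=2\pi$ cannot be settled by Jensen because $g$ is not concave on all of $(0,\pi)$, and ``work with the Lagrangian optimality system and rule out asymmetric critical points by a direct comparison or case analysis'' is a statement of intent, not an argument --- the level sets of $g'$ genuinely admit configurations with unequal angles that must be excluded; and (2) the off-axis apex case is not handled at all, and it cannot be decoupled from (1): for a fixed non-regular base the optimal apex need not be $\bm{e}_3$, and for an off-center apex the optimal base need not be regular, so a two-stage ``first fix the base, then move the apex'' perturbation does not obviously terminate at the claimed configuration. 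As written, the proposal establishes only the (comparatively easy) boundary reduction and then describes obstacles rather than overcoming them.

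It is worth noting how the paper's proof sidesteps both difficulties with a different decomposition. Rather than parametrizing by central angles, it treats the side lengths $s_i$ and apothems $p_i$ (distances from the projected apex $\bm{x}_v'$ to the side lines) as the variables, writes $L=\frac12\sum_i s_i\sqrt{p_i^2+t^2}$, and imposes the base-area constraint $\frac12\sum_i s_ip_i\le\frac12(v-1)R(h)^2\sin\frac{2\pi}{v-1}$. The stationarity condition in the $s_i$ variables, $\frac12\sqrt{p_i^2+t^2}=\frac{\lambda p_i}{2}$, forces $p_1=\cdots=p_{v-1}=:r$ at any maximizer, after which the lateral area factors as $L=\frac12\left(\sum_i s_i\right)\sqrt{r^2+t^2}$. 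The three factors are then bounded by separate classical inequalities --- perimeter at most $2(v-1)R(h)\sin\frac{\pi}{v-1}$, inradius at most $R(h)\cos\frac{\pi}{v-1}$, and $t\le h$ --- each tight exactly at the regular inscribed base with apex at the pole, so all equality cases coincide and no non-concave angle sum or off-axis perturbation ever needs to be analyzed. If you want to salvage your route, you would need to supply a genuine proof that equal angles maximize $\sum_k g(\theta_k)$ (for instance by first proving the equal-apothem condition as the paper does) and a quantitative argument for the apex location; absent those, the proof is incomplete.
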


\begin{proof}
First, fix $v\geq 4$ and a normal direction $\bm{u}\in\Stwo$, which will  determine the plane  that cuts off a cap of height $h$ from the unit ball; by the rotational invariance of $\Stwo$ and the surface area functional, without loss of generality we may assume that $\bm{u}=\bm{e}_3$. Also fix $h\in(0,2)$, and define the plane $H(h):=\bm{e}_3^\perp +(1-h)\bm{e}_3$. For $t\in(0,h]$, define the collection of $v$-pyramids 
\begin{align*}
\mathcal{A}_t :=\{[\bm{x}_1,\ldots,\bm{x}_v] : [&\bm{x}_1,\ldots,\bm{x}_{v-1}]\subset H(h)\cap \mathbb{B}_2^3,\, \bm{x}_v\in \intt(H(h)^+)\cap \mathbb{B}_2^3,\, \\
&\bm{x}_v^\prime\in[\bm{x}_1,\ldots,\bm{x}_{v-1}], \, \dist(\bm{x}_v,H(h))=t\}
\end{align*}
and set $\mathcal{A}_h :=\bigcup_{0< t\leq h}\mathcal{A}_t$. We aim to solve the optimization problem	\begin{equation}\label{opt:v-pyramid}
	\begin{aligned}
	\max \quad & L(P)\\
	\textrm{s.t.} \quad & P\in\mathcal{A}_h
	\end{aligned}
	\end{equation}
	where $L(P)$ denotes the lateral surface area of $P$. Let \(P^*\) be a maximizer of (\ref{opt:v-pyramid}). We show that $P^*$ has height $h$ and regular base inscribed in $H(h)\cap \Stwo$.
	
	\paragraph{Step 1.} 
	Fix \(t\in(0,h]\) and let $P\in\mathcal{A}_t$.  
	Without loss of generality, we may assume that   the sides of the base $(v-1)$-gon are $[\bm{x}_1,\bm{x}_2],\ldots,[\bm{x}_{v-2},\bm{x}_{v-1}]$ labeled inclusively in counterclockwise order. For $i=1,\ldots,v-2$, denote the corresponding side lengths by $s_i:=\|\bm{x}_i-\bm{x}_{i+1}\|_2$, and let $p_i:=\dist(\bm{x}_v^\prime,\affine([\bm{x}_i,\bm{x}_{i+1}]))$ denote the foot length from the orthogonal projection $\bm{x}_v^\prime$ of $\bm{x}_v$ onto $H(h)$ to the line containing side $i$.  With this notation,  the lateral surface area of $P$ is $L(P)=\frac{1}{2}\sum_{i=1}^{v-1} s_i \sqrt{p_i^2+t^2}$ (see Figure 2 below).
\begin{center}
\tdplotsetmaincoords{72}{26}
\def\r{1}
  \begin{tikzpicture}[scale=4.5,line join=bevel, tdplot_main_coords]
    \coordinate (O) at (0,0,0);

\coordinate (A) at (1,0,0);
\coordinate (B) at ({-1/sqrt(2)},{1/sqrt(2)},0);
\coordinate (C) at ({-1/sqrt(2)},{-1/sqrt(2)},0);
\coordinate (D) at (0,0,1);
\coordinate (E) at (0,0,{-1});
\coordinate (P) at ({-1/(sqrt(2)+1)}, {-1/8}, {2/3});
\coordinate(Pproj) at ({-1/(sqrt(2)+1)}, {-1/8}, 0);
\coordinate (F) at ({1/sqrt(2)}, {-1/sqrt(2)},0);

\begin{scope}[thick]
    \draw[dashed] (P)--(B);
    \draw (A) -- (P) -- (C);
      \draw[dashed] (A)--(B) node[midway, below]{$s_1$};

        \draw (A)--(F) node[midway,above]{$s_4$};

       \draw[dashed] (C) -- (B) node[midway,above]{$s_2$};
       
       \draw (F)--(C) node[midway,below]{$s_3$};
       
       \draw (P)--(F);

\end{scope}

\begin{scope}[opacity=0.8]
\draw[tdplot_screen_coords] (0,0,0) circle (\r);
\tdplotCsDrawLatCircle{\r}{0}
\end{scope} 

\filldraw[black] (0,0,0) circle (0.25pt) node[anchor=east] {$\bm{e}_3'$};
\filldraw[black] (0,0,1) circle (0.25pt) node[anchor=east] {$\bm{e}_3$};
\filldraw[black] (P) circle (0.25pt)  node[anchor=south] {$\bm{x}_5$};
\filldraw[black] (Pproj) circle (0.25pt) node[anchor=north] {$\bm{x}_5'$};
\filldraw[black] (A) circle (0.25pt)  node[anchor=north] {$\bm{x}_1$};
\filldraw[black] (B) circle (0.25pt)  node[anchor=south west] {$\bm{x}_2$};
\filldraw[black] (C) circle (0.25pt) node[anchor=north] {$\bm{x}_3$};
\filldraw[black] (F) circle (0.25pt) node[anchor=north] {$\bm{x}_4$};

\begin{scope}
\draw[dashed] (P)--(Pproj) node[midway,left]{$t$};
\end{scope}

\begin{scope}
\draw[thin] (D)--(O) node[midway,right]{$h$};
\end{scope}

\coordinate (S) at ({(1-7*sqrt(2))/(32+16*sqrt(2))},{(15+8*sqrt(2))/(32+16*sqrt(2))},0);
\begin{scope}
\draw[thin] (Pproj)--(S) node[midway,above]{$p_1$};
\end{scope}


\coordinate (U) at ({-1/sqrt(2)},{-1/8},0);
\begin{scope}
\draw[thin] (Pproj)--(U) node[midway,below]{$p_2$};
\end{scope}

\coordinate (V) at ({-1/(sqrt(2)+1)}, {-1/sqrt(2)},0);
\begin{scope}
\draw[thin] (Pproj)--(V) node[midway,below]{$p_3$};
\end{scope}

\coordinate (W) at ({(48-17*sqrt(2))/32}, {-(18+sqrt(2))/32},0);
\begin{scope}
\draw[thin] (Pproj)--(W) node[midway,below]{$p_4$};
\end{scope}

\tkzMarkRightAngle[size=.04](Pproj,S,A);
\tkzMarkRightAngle[size=.04](Pproj,U,B);
\tkzMarkRightAngle[size=.04](Pproj,V,C);
\tkzMarkRightAngle[size=.04](Pproj,W,A);

  \end{tikzpicture}
  
  {\footnotesize {\bf Figure 2}: The set-up for the proof of Lemma 3, with an example shown for the case $v=5$.}
\end{center}
	
		In the first step, we maximize \(L(P)=L(P,h,t,c)\) over all $v$-pyramids $P\in\mathcal{A}_t$ 
	with fixed area of the base $c>0$.  
	That is, we solve the optimization problem
	\begin{equation}\label{optv:step 0}
	\begin{aligned}
	\max \quad & L(P,h,t,c)=\frac{1}{2}\sum_{i=1}^{v-1} s_i \sqrt{p_i^2+t^2}\\
	\textrm{s.t.} \quad & P\in\mathcal{A}_t\\ 
	& \frac{1}{2}\sum_{i=1}^{v-1} s_i p_i=c.
	\end{aligned}
	\end{equation}
In order to find a necessary condition for the maximizer in  \eqref{optv:step 0}, we will instead solve a constraint-released problem. A necessary condition for the maximum surface area $v$-pyramid (resp. $v$-bipyramid) inscribed in $\Stwo$ is that the projection of the apex (apexes) lies in the interior of the base $(v-1)$-gon (central $(v-2)$-gon). Thus, to prove Theorem \ref{mainThm} it suffices to consider only those pyramids  that satisfy the condition $\bm{x}_v^\prime\in\intt([\bm{x}_1,\ldots,\bm{x}_{v-1}])$. Now, with Lemma \ref{k-gon maximal perim} in mind, we consider the  problem
	\begin{equation}\label{optv:step 1}
	\begin{aligned}
	\max \quad & L_1(P,h,t)=\frac{1}{2}\sum_{i=1}^{v-1} s_i \sqrt{p_i^2+t^2}\\
	\textrm{s.t.} \quad 
	& \frac{1}{2}\sum_{i=1}^{v-1}s_i p_i \leq \frac{1}{2}(v-1)R(h)^2\sin\frac{2\pi}{v-1}\\
	&p_i\geq 0, \,\, i=1,\ldots,v-1
	\end{aligned}
	\end{equation}
	where \(R(h):=\sqrt{1-(1-h)^2}\) is the radius of \(H(h)\cap \mathbb{B}_2^3\). The maximum is achieved on the feasible set and, since we are not restricting $s_i$ to be nonnegative,   there is no global minimum of $L_1$. The maximum is achieved when all $s_i>0$ and the problem with the constraints $s_i\geq 0$ has the same optimal solution as \eqref{optv:step 1}.   
The Lagrangian is \[\mathcal{L}:=L_1-\lambda \left(\frac{1}{2}\sum_{i=1}^{v-1}s_i p_i - \frac{1}{2}(v-1)R(h)^2\sin\frac{2\pi}{v-1}\right)+\sum_{i=1}^{v-1}\mu_i p_i
\]
where 
$\lambda,\mu_1,\ldots,\mu_{v-1}\geq 0$ are the Karush-Kuhn-Tucker (KKT) multipliers. In particular, the first-order necessary condition  $\nabla \mathcal L={\mathbf 0}$ yields that for  $i=1,\ldots,v-1$,
\begin{align*}
    \frac{\partial\mathcal{L}}{\partial s_i}&=\frac{1}{2}\sqrt{p_i^2+t^2}-\frac{\lambda p_i}{2}=0
\end{align*}
which implies
	\begin{equation}\label{inradius-vcondi}
	 p_1=\cdots=p_{v-1}.
	 \end{equation}
Thus, if \( P(h,t)\) is a maximizer of  problem (\ref{optv:step 1}), then it must satisfy  \eqref{inradius-vcondi}. Condition \eqref{inradius-vcondi}  implies that  the inball  centered at $\bm{x}_v^\prime$ with radius $r=p_1=\cdots=p_{v-1}$ is the maximal ball contained in  \([\bm{x}_1,\ldots,\bm{x}_{v-1}]\) and it is tangent to each side of the base. 
Note that a critical point of  problem \eqref{optv:step 0} is a critical point of  problem \eqref{optv:step 1}, which in turn fulfills  condition \eqref{inradius-vcondi}. Thus, the optimal polyhedron \(P^*\) satisfies  condition 
	 (\ref{inradius-vcondi}). 
	 
	\paragraph{Step 2.}  
	In step 1 we found that for each $h\in(0,2)$, among all $v$-pyramids $P\in\mathcal{A}_h$ 
	the maximum  lateral surface area is achieved at some pyramid $P^*$ satisfying \( p_1=\cdots=p_{v-1}\) and there exists a ball inscribed in and tangent to each side of its base.
	Hence,  problem \eqref{opt:v-pyramid} reduces to 
	\begin{equation}\label{optv:step 2}
	\begin{aligned}
	\max \quad & L(P,h,t)=\frac{1}{2}\left(\sum_{i=1}^{v-1}s_i\right)\sqrt{r(P,h)^2+t(P)^2}\\
	\textrm{s.t.} \quad & P\in\mathcal{A}_h\\
	& p_1=\cdots=p_{v-1}=:r(P,h).
	\end{aligned}
	\end{equation}
	Under the constraints in \eqref{optv:step 2}, the following inequalities hold: 
	\begin{itemize}
	    \item[(i)] By Lemma \ref{k-gon maximal perim}, \(\sum_{i=1}^{v-1}s_i\le 2(v-1)R(h)\sin\frac{\pi}{v-1} \)     with equality  if and only if $[\bm{x}_1,\ldots,\bm{x}_{v-1}]$ is regular and $\bm{x}_1,\ldots,\bm{x}_{v-1}\in \Stwo\cap H(h)$;
	    \item[(ii)] \( r(P,h)\le R(h)\cos\frac{\pi}{v-1}\) with equality if and only if $[\bm{x}_1,\ldots,\bm{x}_{v-1}]$ is regular \cite{Toth1948-1, Toth1948-2}; and 
	    \item[(iii)] \( t(P)\le h\) with equality if and only if $\bm{x}_v=\bm{e}_3$.
	\end{itemize}
		   Equality holds in all of (i), (ii) and (iii) simultaneously if and only if $P$ has height $h$ and the base of \(P\) is a regular $(v-1)$-gon inscribed in \(H(h)\cap \Stwo\). Therefore,  
    \begin{align}
          L(P) &\le  \frac{1}{2}\times 2(v-1)\sin\frac{\pi}{v-1}R(h)\times\sqrt{R(h)^2\cos^2\frac{\pi}{v-1}+h^2} \nonumber\\
          &=(v-1)\sin\frac{\pi}{v-1}h\sqrt{2-h}\sqrt{h\sin^2\frac{\pi}{v-1}+2\cos^2\frac{\pi}{v-1}} \label{opt:step2max},
          \end{align}
with equality   if and only if \( P\) is the right pyramid of height $h$ with regular base inscribed in $H(h)\cap\Stwo$.    
\end{proof}

Next, we use Lemma \ref{fix_h:max_lateralSA} to determine the optimal 5-pyramid inscribed in $\Stwo$. Naturally, the optimizer has a square pyramidal structure.

\begin{cor}\label{BestPyr}
Let $P$ be a 5-pyramid inscribed in $\Stwo$. Then
\[
S(P) \leq 4\eta-2\eta^2+2\sqrt{4\eta^2-\eta^4}=5.77886\ldots
\]
where $\eta:=\frac{1}{3}\left(1-\sqrt{46}\sin\left(\frac{\pi}{6}-\frac{1}{3}\arccos\left(-\frac{149}{23\sqrt{46}}\right)\right)\right)=1.2622\ldots$. Equality holds if and only if $P$ is a rotation of the 5-pyramid with height $\eta$ and square base inscribed in the circle $\Stwo\cap(\bm{e}_3^\perp-(\eta-1) \bm{e}_3)$.
\end{cor}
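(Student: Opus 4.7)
The plan is to apply Lemma \ref{fix_h:max_lateralSA} with $v=5$ to reduce the problem to a one-parameter optimization over the pyramid's height, and then to solve the resulting cubic in closed form.

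First, I would reduce to the setting of Lemma \ref{fix_h:max_lateralSA}. By rotational invariance of $\Stwo$ and the surface area functional, I may assume the base of $P$ lies in the plane $H(h)=\bm{e}_3^\perp+(1-h)\bm{e}_3$ for some $h\in(0,2)$, so that the four base vertices lie on the circle $\Stwo\cap H(h)$ of radius $R(h)=\sqrt{h(2-h)}$ and the apex lies in the spherical cap $\{\bm{x}\in\Stwo:\langle \bm{x},\bm{e}_3\rangle\ge 1-h\}$. As noted in the proof of Lemma \ref{fix_h:max_lateralSA}, the projection of the apex of a surface area maximizer must fall in the interior of the base, so $P\in\mathcal{A}_h$.

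Next, I would apply Lemma \ref{fix_h:max_lateralSA} with $v=5$: for each fixed $h$, the lateral surface area is maximized when the apex is at $\bm{e}_3$ (height $t=h$) and the base is the square inscribed in $\Stwo\cap H(h)$. For the same base circle, Lemma \ref{k-gon maximal perim} shows the base area is also maximized by this same regular square. Hence the total surface area $S(P)=L(P)+\mathrm{area}(\mathrm{base})$ is jointly maximized by the right pyramid with apex $\bm{e}_3$ and square base inscribed in $\Stwo\cap H(h)$. Specializing \eqref{opt:step2max} to $v=5$ gives $L(h)=2h\sqrt{4-h^2}$, while the base area equals $2R(h)^2=2h(2-h)$, so it remains to maximize
\[
S(h)=2h\sqrt{4-h^2}+2h(2-h), \qquad h\in(0,2].
\]

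Since $S(0)=S(2)=0$, the maximum is attained at a critical point in $(0,2)$. Computing $S'(h)$, rearranging to $(2-h^2)/\sqrt{4-h^2}=h-1$, and squaring yields (after cancellation of the extraneous factor $h$) the cubic
\[
2h^3-2h^2-7h+8=0.
\]
The discriminant is negative (casus irreducibilis), so all three roots are real. The substitution $h=u+\tfrac{1}{3}$ produces the depressed cubic $u^3-\tfrac{23}{6}u+\tfrac{149}{54}=0$, to which I would apply Viète's trigonometric formula to express the three roots in terms of $\tfrac{\sqrt{46}}{3}\cos\bigl(\tfrac{1}{3}\arccos(-\tfrac{149}{23\sqrt{46}})-\tfrac{2\pi k}{3}\bigr)$. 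The unique root lying in $(1,\sqrt{2})$ (the range forced by the sign condition from the squaring step) is the stated $\eta$, and the identity $\cos(\theta-2\pi/3)=\sin(\theta-\pi/6)$ rewrites it in the displayed form. Substituting $h=\eta$ back gives $S(\eta)=4\eta-2\eta^2+2\eta\sqrt{4-\eta^2}=4\eta-2\eta^2+2\sqrt{4\eta^2-\eta^4}$, and a numerical evaluation confirms the decimal value $5.77886\ldots$.

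The routine work is the calculus and the Viète computation; the only delicate point is bookkeeping around the squaring step, namely confirming that the relevant critical point lies in $(1,\sqrt{2})$ so the correct Viète branch is selected and the algebraic identification with the stated trigonometric expression for $\eta$ is valid. Uniqueness of the maximizer up to rotation follows from the equality cases in Lemmas \ref{k-gon maximal perim} and \ref{fix_h:max_lateralSA}, together with strict concavity of $S$ at $\eta$.
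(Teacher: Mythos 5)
Your proposal follows essentially the same route as the paper: reduce to a right pyramid with square base via Lemma \ref{fix_h:max_lateralSA} and Lemma \ref{k-gon maximal perim}, then maximize $F_5(h)=4h-2h^2+2\sqrt{4h^2-h^4}$ over $h$, arriving at the same cubic $2h^3-2h^2-7h+8=0$. Your sign analysis forcing the genuine critical point into $(1,\sqrt{2})$ is in fact a slightly cleaner way of discarding the extraneous root $h_1\approx 1.654$ than the paper's ``checking cases,'' but the argument is otherwise identical.
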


\begin{proof}
	We shall solve the optimization problem 
	\begin{equation}\label{optS:v-pyramid}
	\begin{aligned}
	\max \quad & S(P)\\
	\textrm{s.t.} \quad & 
	P \text{  is a }5\text{-pyramid}\\
	& P\subset \mathbb{B}_2^3.
	\end{aligned}
	\end{equation}
The maximizer has height $h\geq 1$, so we may assume this holds.  Fix $h\in[1,2)$ and let $P(h)\in\mathcal{A}_h$.   
	 By Lemma \ref{fix_h:max_lateralSA},  the lateral surface area $L(P(h))$ is maximized precisely when $P(h)$ has height $h$ and square base inscribed in $H(h)\cap\Stwo$. By Lemma \ref{k-gon maximal perim}, for any height $h$ the area of the base is also maximized when the base is a square inscribed in $\Stwo\cap H(h)$. Thus, 
	 \[
	 \begin{aligned}
	 S(P(h)) & = B(P(h))+L(P(h))  \le  4h-2h^2+2\sqrt{4h^2-h^4},
	 \end{aligned}
	 \]
	 where  \( B(P(h)):=4h-2h^2\) is the area of the square base of \(P(h)\) (here we have used that $h\geq 1$). 	 It remains to optimize over $h$. Define $F_5(h):=4h-2h^2+2\sqrt{4h^2-h^4}$. Setting 
	 \[
	 \frac{dF_5}{dh}=4-4h+\frac{8-4h^2}{\sqrt{4-h^2}}=0
	 \] 
	 we obtain the equation $2h^3-2h^2-7h+8=0$. The roots of the cubic  that lie in $[1,2)$ are
	 \begin{align*}
	     h_1 &=\frac{1}{3}\left(1 +  \sqrt{46} \cos\left(\frac{1}{3}\arccos\left(\frac{-149}{23 \sqrt{46}}\right)\right)\right)=1.6538868\ldots\\
	     h_2 &= \frac{1}{3}\left(1-\sqrt{46}\sin\left(\frac{\pi}{6}-\frac{1}{3}\arccos\left(-\frac{149}{23\sqrt{46}}\right)\right)\right)=1.2622\ldots.
	 \end{align*}
	 Checking cases, we find that $S(h)$ attains its global maximum at $h_2=:\eta$. The equality conditions follow from those in Lemma \ref{fix_h:max_lateralSA}.
\end{proof}
As an immediate corollary, it follows that the surface area discrepancy of an observed $v$-pyramid $P$ equals
\begin{equation}\label{pyrdisc}
    \mathscr{S}(P) = 1-\frac{S(P)}{F_5(\eta)}\approx 1-(0.173) S(P).
\end{equation}
In Section \ref{comparison}, we modify the previous arguments to show that the volume and surface area discrepancies are distinct for the class of $v$-pyramids with $v\geq 5$.



More generally, let $P$ be a $v$-pyramid contained in $\mathbb{B}_2^3$. 
Modifying the proof of Corollary \ref{BestPyr},  it can be shown that $S(P) \leq F_v(\eta)$, 
where 
\begin{align}\label{bestvpyrSA}
F_v(h) &:= \frac{1}{2}(v-1)(2h-h^2)\sin\frac{2\pi}{v-1} \nonumber\\
&+(v-1)\sin\frac{\pi}{v-1}\sqrt{h^2(2h-h^2)+(2h-h^2)^2\cos^2\frac{\pi}{v-1}}
\end{align}
and \( \eta\in (1,2)\) is the optimal height for which \( F(h)\le F(\eta)\) for all \(h\in (0,2)\). 
The equality holds if and only if the vertices of $P$ are (up to rotation) the north pole \(\bm{e}_3\) and the corners of the regular $(v-1)$-gon inscribed in $\Stwo\cap (\bm{e}_3^\perp-(\eta-1)\bm{e}_3)$. 
We leave the details of computing a general formula for $\eta=\eta(v)$ in terms of $v$ to the interested reader; to complete the proof of Theorem \ref{mainThm}, we will  only need the case $v=5$ from Corollary \ref{BestPyr}. In the next corollary, however, we state the formula for the surface area of the optimal $v$-bipyramid in full generality.	 


\begin{cor}\label{BPlem}
Let $P$ be a $v$-bipyramid inscribed in $\Stwo$ and let   $\omega_v:=\frac{\pi}{v-2}$. Then
\[
S(P)\leq 2(v-2)\sqrt{1+\cos^2\omega_v}\sin\omega_v
\]
with equality  if and only if $P$ is a rotation of the convex hull of  
the north and south poles $\pm \bm{e}_3$ and the  regular $(v-2)$-gon inscribed in the equator $\Stwo\cap \bm{e}_3^\perp$. 
\end{cor}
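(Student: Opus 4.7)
The plan is to reduce to Lemma \ref{fix_h:max_lateralSA} by decomposing a $v$-bipyramid into two half-pyramids sharing a common base. Let $P$ be a $v$-bipyramid inscribed in $\Stwo$ with central $(v-2)$-gon $Q$ and apexes $\bm{a}_1,\bm{a}_2$, and let $H=\affine(Q)$. Setting $P_i:=\conv(Q\cup\{\bm{a}_i\})$, each $P_i$ is a $(v-1)$-pyramid with base $Q$; since $Q$ lies in the interior of $P$, the lateral faces of $P_1$ and $P_2$ together comprise all faces of $P$, so $S(P)=L(P_1)+L(P_2)$. The plane $H$ cuts $\mathbb{B}_2^3$ into two caps $K_1,K_2$ of heights $h_1,h_2\in(0,2)$ with $h_1+h_2=2$, and (relabeling if necessary) $P_i\subset K_i$ by convexity.

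I would then invoke Lemma \ref{fix_h:max_lateralSA} for each half-pyramid. This requires that the orthogonal projection of each $\bm{a}_i$ onto $H$ lies in $Q$, which is the necessary condition for the bipyramidal maximizer mentioned in the proof of Corollary \ref{BestPyr}; I would justify it by a short perturbation argument (if the projection lay outside $Q$, sliding $\bm{a}_i$ along $\Stwo$ toward the pole of $K_i$ strictly increases $L(P_i)$ while leaving $L(P_{3-i})$ unchanged). Applying the lemma with $v-1$ playing the role of $v$, so that $\frac{\pi}{v-2}=\omega_v$, yields
\[
L(P_i)\le (v-2)\sin\omega_v\cdot h_i\sqrt{2-h_i}\,\sqrt{h_i\sin^2\omega_v + 2\cos^2\omega_v},
\]
with equality iff $P_i$ has height $h_i$ and base the regular $(v-2)$-gon inscribed in $H\cap\Stwo$. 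The two equality conditions are compatible: both force $Q$ to be the \emph{same} regular $(v-2)$-gon inscribed in $H\cap\Stwo$, and each $\bm{a}_i$ to sit at the pole of $K_i$ along the line through the centroid of $Q$ normal to $H$.

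The remaining step is a one-parameter optimization over the position of $H$. Parametrizing by $c:=1-h_1\in(-1,1)$ (so $h_2=1+c$ and $\sqrt{1-c^2}$ is the circumradius of $Q$), a direct computation collapses the sum of per-pyramid bounds to
\[
S(c)=(v-2)\sin\omega_v\sqrt{1-c^2}\,\bigl[\sqrt{A-2c}+\sqrt{A+2c}\bigr],\qquad A:=1+\cos^2\omega_v+c^2\sin^2\omega_v.
\]
I would first use the concavity of $\sqrt{\cdot}$ (equivalently Cauchy--Schwarz) to bound $\sqrt{A-2c}+\sqrt{A+2c}\le 2\sqrt{A}$, with equality iff $c=0$. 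A direct expansion then gives
\[
(1-c^2)A=(1+\cos^2\omega_v)-c^2\bigl(2\cos^2\omega_v+c^2\sin^2\omega_v\bigr)\le 1+\cos^2\omega_v,
\]
again with equality iff $c=0$ (note $\omega_v\le\pi/3$ guarantees $\cos\omega_v>0$). Combining these inequalities yields $S(P)\le 2(v-2)\sin\omega_v\sqrt{1+\cos^2\omega_v}$, and tracing the equality conditions forces $c=0$ (so $H=\bm{e}_3^\perp$), $Q$ to be the regular $(v-2)$-gon inscribed in the equator, and $\{\bm{a}_1,\bm{a}_2\}=\{\bm{e}_3,-\bm{e}_3\}$.

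The main obstacle will be the justification of the apex-projection condition needed to invoke Lemma \ref{fix_h:max_lateralSA} on each half-pyramid; once that is in place, the rest is a clean reduction to a symmetric one-parameter inequality solved by two elementary estimates.
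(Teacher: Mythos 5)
Your proof is correct and follows essentially the same route as the paper: decompose the bipyramid into two pyramids over the common central $(v-2)$-gon, apply Lemma \ref{fix_h:max_lateralSA} to each half (with $v-1$ in place of $v$), and optimize over the position of the cutting plane. The only difference is in the final one-variable step --- where the paper computes $S'(h)$ and checks that $h=1$ is the unique critical point, you close with the two elementary estimates $\sqrt{A-2c}+\sqrt{A+2c}\le 2\sqrt{A}$ and $(1-c^2)A\le 1+\cos^2\omega_v$, which makes the equality case $c=0$ transparent; note that, like the paper, you assert rather than fully prove the apex-projection condition needed to invoke the lemma.
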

As an immediate corollary, it follows that the surface area discrepancy of an observed $v$-bipyramid $P$ equals
\begin{equation}
    \mathscr{S}(P) = 1-\frac{S(P)}{2(v-2)\sqrt{1+\cos^2\omega_v}\sin\omega_v}.
\end{equation}

\begin{proof}
Let $P$ be an arbitrary $v$-bipyramid inscribed in $\Stwo$. There exists a unique hyperplane that passes through $\intt(P)$ and contains $(v-2)$ vertices of $P$. Without loss of generality, we may assume that this hyperplane is $H(h):=\bm{e}_3^\perp+(1-h)\bm{e}_3$. There exist $P_1\in\mathcal{A}_h$ and $P_2\in\mathcal{A}_{2-h}$ such that $S(P)=L(P_1)+L(P_2)$. Thus, for each $h$, the surface area $S(P)$ is maximized if $L(P_1)$ and $L(P_2)$ are simultaneously maximized.  By Lemma \ref{fix_h:max_lateralSA}, it suffices to consider only those $v$-bipyramids $P(v,h)$ that are the convex hull of $\pm \bm{e}_3$ and a regular $(v-2)$-gon inscribed in $H(h)\cap \Stwo$. Hence, $P_1$ has height $h$, $P_2$ has height $2-h$ and their common base is a regular $(v-2)$-gon, as in Figure 3 below. 

\begin{center}
\tdplotsetmaincoords{80}{76}
\def\r{1}
  \begin{tikzpicture}[scale=3.3,line join=bevel, tdplot_main_coords]
    \coordinate (O) at (0,0,0);

\coordinate (A) at ({sqrt(15)/4},0,1/4);
\coordinate (B) at ({-sqrt(15)/8},{sqrt(45)/8},1/4);
\coordinate (C) at ({-sqrt(18)/8},{-sqrt(45)/8},1/4);
\coordinate (D) at (0,0,1);
\coordinate (E) at (0,0,{-1});
\coordinate (F) at (0,0,1/4);

\begin{scope}[thick]
    \draw (A) -- (D)--(B);
    \draw (A) -- (D) -- (C);
    \draw (A) -- (B)--(E);
    \draw (A)--(C)--(E);
    \draw (A)--(E)--(B);
\end{scope}

\draw[thick,fill=blue, opacity=0.1] (A) -- (D)--(B);
\draw[thick,fill=blue, opacity=0.1] (A) -- (D) -- (C);
\draw[thick,fill=blue,opacity=0.1](A) -- (C) -- (E);  
\draw[thick,fill=blue,opacity=0.1] (A)--(E)--(B);  

\begin{scope}[dashed, thick] 
    \draw (C) -- (B);
\end{scope}

\begin{scope}
    \draw[thin] (D) -- (F);
\end{scope}

\draw [decorate,decoration={brace,amplitude=10pt},xshift=-4pt,yshift=0pt]
(F) -- (D) node [black,midway,xshift=-0.6cm] 
{$h$};

\begin{scope}
    \draw[thin] (E) -- (F);
\end{scope}

\draw [decorate,decoration={brace,amplitude=10pt},xshift=-4pt,yshift=0pt]
(E) -- (F) node [black,pos=0.5,xshift=-0.9cm] 
{$2-h$};

\begin{scope}[opacity=0.8]
\draw[tdplot_screen_coords] (0,0,0) circle (\r);
\end{scope} 

\begin{scope}[opacity=0.6]
\tdplotCsDrawLatCircle{\r}{14.477512185929923878771034799127166005131597624556616476050118008}
\end{scope} 

\filldraw[black] (0,0,1/4) circle (0.25pt) node[anchor=west] {$\mathbf{0}_h$};
\filldraw[black] (0,0,1) circle (0.25pt) node[anchor=south] {$\bm{e}_3$};
\filldraw[black] (0,0,-1) circle (0.25pt) node[anchor=north] {$-\bm{e}_3$};
\filldraw[black] ({sqrt(14)/4},0,1/4) circle (0.25pt) node[anchor=north west] {$\bm{\zeta}^h_1$};
\filldraw[black] (B) circle (0.25pt) node[anchor=south] {$\bm{\zeta}^h_2$};
\filldraw[black] (C) circle (0.25pt) node[anchor=east] {$\bm{\zeta}^h_{v-2}$};

  \end{tikzpicture}

  {\footnotesize {\bf Figure 3}: The set-up for the proof of Corollary 2 in the case $v=5$. For $v\geq 5$, the points $\bm{\zeta}_i^h$, $1\leq i\leq v-2$, are the $(v-2)$th roots of unity in the circle $\mathbb{S}^2\cap (\bm{e}_3^\perp +(1-h)\bm{e}_3)$ with center $\mathbf{0}_h:=(1-h)\bm{e}_3$.}
  \end{center}

It remains to optimize over $h$. The surface area $S(h)$ of such a $v$-bipyramid  is
\begin{align*}
S(h) =(&v-2)\sin\omega_v \\
&\times\left(\sqrt{2h^3-h^4+(2h-h^2)^2\cos^2\omega_v}+\sqrt{(2-h)^2(2h-h^2)+(2h-h^2)^2\cos^2\omega_v}\right).
\end{align*}
A short computation yields that $h=1$ is the only critical point, and that $S^\prime(h)>0$ for $h\in(0,1)$ and $S^\prime(h)<0$ for $h\in(1,2)$. Thus, the vertices of the regular $(v-2)$-gon lie in the equator $\Stwo\cap \bm{e}_3^\perp$. This shows that  the polyhedron  defined in the statement of the lemma maximizes surface area among all $v$-bipyramids,  and it has surface area 
\[
S(1) = 2(v-2)\sqrt{1+\cos^2\omega_v}\sin\omega_v.
\]
\end{proof}


\subsection{Conclusion of the Proof of Theorem \ref{mainThm}}
By Corollary \ref{BestPyr}, the maximum surface area of a 5-pyramid inscribed in $\Stwo$ is less than 5.78. By Corollary \ref{BPlem}, the maximum surface area 5-bipyramid inscribed in $\Stwo$ is a rotation of $[\bm{e}_3,-\bm{e}_3,  \bm{e}_1,\bm{\zeta}_1,\bm{\zeta}_2]$ with surface area $3\sqrt{15}/2>5.78$. Thus, by Lemma \ref{mainLem1}, the triangular bipyramid $Q_5^*=[\bm{e}_3,-\bm{e}_3,  \bm{e}_1,\bm{\zeta}_1,\bm{\zeta}_2]$ maximizes surface area among all polyhedra with 5 vertices that are contained in $\mathbb{B}_2^3$.   \qed

\vspace{3mm}

We conclude this section with a couple of  observations related to the proofs.

\begin{remark}
A necessary condition for an optimal 5-pyramid (resp. 5-bipyramid) is that the orthogonal projection of the apex lies  (apexes lie)  in the  base (triangular cross-section containing three vertices). Under this condition, one can express the surface area of the base (triangular cross-section containing three vertices) in two ways to get the constraint
\begin{align*}
g &:=\frac{1}{2}\sum_{i=1}^3 s_i p_i -\frac{1}{4}\sqrt{(s_1^2+s_2^2+s_3^2)^2-2(s_1^4+s_2^4 +s_3^4) }=0\\
(\text{resp. }g &:=\frac{1}{2}\sum_{i=1}^4 s_i p_i -\frac{1}{4}\sqrt{(s_1^2+s_2^2+s_3^2 +s_4^2)^2+8s_1 s_2 s_3 s_4-2(s_1^4+s_2^4+s_3^4 +s_4^4) }=0)
\end{align*}
where we have used Heron's formula (Bragmagupta's formula). Using Lagrange multipliers, one maximizes the lateral surface area $L(P)=\frac{1}{2}\sum_i s_i\sqrt{p_i^2+t^2}$ under this constraint to derive that all $p_i$ are equal and all $s_i$ are equal. A generalization of Heron's formula was given in \cite{MRR}, and explicit formulas were proven for $v$-gons with up to 8 vertices. These formulas can be used in step 1 to conclude the  result for all $v$-pyramids with $v\leq 9$.
\end{remark}

\begin{remark}
One can use Lemma \ref{fix_h:max_lateralSA} to determine the tetrahedron of maximum surface area inscribed in $\Stwo$, which is already known \cite{Heppes, Krammer}; see also \cite{tanner, Toth}. Minor modifications can be made to the proof of Theorem \ref{mainThm} to determine the maximum volume polyhedron with five vertices inscribed in $\Stwo$, which is also already known  \cite{BermanHanes1970}.
\end{remark}

\section{Comparison of the Volume and Surface Area Discrepancies for Pyramids and Bipyramids}\label{comparison}


 Modifying the proof of Lemma \ref{fix_h:max_lateralSA}, it can be shown that among all $v$-pyramids in $\mathcal{A}_h$, the maximum volume is achieved precisely when the base is regular and the pyramid has height $h$. Optimizing over $h\in[1,2)$ as in Corollary \ref{BestPyr}, we derive that  the maximum volume $v$-pyramid in $\mathcal{A}_h$ ($h\geq 1$) has regular base and height $h$, and its volume equals
\begin{equation}\label{bestvpyrvol}
    V(h) := \frac{v-1}{6}\left(2h^2-h^3\right)\sin\frac{2\pi}{v-1}.
\end{equation}
The only critical point in $[1,2)$ is $h=\tfrac{4}{3}$, which yields the unique maximum volume $v$-pyramid for any $v\geq 4$.  On the other hand, in \eqref{bestvpyrSA} we gave the maximum surface area $F_v(h)$ that can be achieved by an inscribed $v$-pyramid. It is an elementary computation to show that  $F_v^\prime(\tfrac{4}{3})\neq 0$ for any $v\geq 5$. Thus for any $v\geq 5$, the volume and surface area discrepancies   are distinct measures of distortion on the class of $v$-pyramids.  

For the class of $v$-bipyramids,  modifications to the previous arguments show that the volume and surface area maximizers coincide (up to rotations). Hence, the volume and surface area discrepancies are equivalent on the class of $v$-bipyramids. The maximum volume achieved by the ideal $v$-bipyramid is $\tfrac{1}{3}(v-2)\sin\tfrac{2\pi}{v-2}$, and thus the volume discrepancy of an observed $v$-bipyramid $P$ equals
\begin{equation}
    \mathscr{V}(P) = 1-\left(\frac{3\csc\tfrac{2\pi}{v-2}}{v-2}\right)V(P).
\end{equation}


\section{Summary and Discussion}\label{Problems}

In \eqref{SAdiscrepancy} we defined the surface area discrepancy $\mathscr{S}(P)$ between a coordination polyhedron $P$  and the (combinatorially equivalent) ideal polyhedron $\widehat{Q}_{\text{SA}}(P)$  that maximizes surface area. In Corollaries \ref{BestPyr} and \ref{BPlem} we analytically determined the $v$-pyramid and $v$-bipyramid, respectively, of maximum surface area inscribed in $\Stwo$. Our proofs show that the maximizers are unique up to rotations. The resulting formulas   \eqref{SAdiscform} and \eqref{pyrdisc} can be used in applications to compute the surface area discrepancy  of any observed coordination polyhedron with five ligand atoms. In Section \ref{comparison} we showed  that the volume and surface area discrepancies are not equivalent for certain types of polyhedra, such as the $v$-pyramids. We used the corollaries to prove the main result, which states that among all polyhedra with $v=5$ vertices inscribed in the sphere $\mathbb{S}^2$, the global surface area maximizer $Q_5^*$ is the triangular bipyramid $[\bm{e}_3, -\bm{e}_3, \bm{e}_1, \bm{\zeta}_1, \bm{\zeta}_2]$ with surface area $S(Q_5^*)=3\sqrt{15}/2$. Our proof shows that the maximizer is unique up to rotations. 

The cases $v\leq 12$ for which the global surface area maximizer $Q_v^*$ has been determined explicitly are listed in Table 1. Prior to this work, the maximum surface area polyhedron with $v$ vertices inscribed in $\Stwo$ was determined for the cases $v=4,6,12$. In the case $v=5$, Akkiraju \cite{akkiraju} defined a local optimality condition, but did not give a  proof which explicitly determined the maximizer. Numerical simulations led Akkiraju to conjecture that Theorem \ref{mainThm} holds, and he asked for a  proof of this result. Theorem \ref{mainThm} confirms Akkiraju's conjecture in the affirmative. 

To the best of our knowledge, it is an open problem to determine the maximum surface area polyhedron $Q_v^*$  inscribed  in $\Stwo$ with $v$ vertices for $7\leq v\leq 11$ and $v\geq 13$. Based on numerical investigations, in the case $v=7$ we conjecture that $Q_7^*$  is the convex hull of the north and south poles and five vertices forming an equilateral pentagon in the equator (called a {\it pentagonal bipyramid}). Already in the case $v=7$, there are 34 combinatorial types of polyhedra (e.g., \cite{federico}), and the number of types explodes as $v$ increases (e.g., \cite{Grunbaum}). Thus, one needs some strong necessary condition(s) that can be used to eliminate most combinatorial types from consideration. 
More generally, for $d\geq 4$ it is an open problem to determine the polyhedron of maximum surface area inscribed in $\mathbb{S}^{d-1}$ with $v\geq d+2$ vertices.

The work \cite{akkiraju} was motivated in part by that of  \cite{BermanHanes1970}, where the maximum volume polyhedra inscribed in $\Stwo$ with $v\leq 8$ vertices were determined analytically. 
The later work \cite{HorvathLangi} extended the methods in \cite{BermanHanes1970} to  determine the maximum volume polyhedron inscribed in $\Sp$ with $d+2$ vertices, $d\geq 2$, and also with $d+3$ vertices when $d$ is odd. A key step in the proof is showing that the maximizer is simplicial, meaning each facet is a simplex in its affine hull. Perhaps those arguments  can be adapted to find the maximum surface area polyhedron inscribed in $\mathbb{S}^{d-1}$ with $d+2$ vertices.

The global volume maximizer  coincides with the global surface area maximizer   in all of the  cases  where both optimizers are known explicitly, which are $v=4,5,6,12$. It is natural to ask if this holds (up to rotations) for all $v\geq 4$. This was conjectured in \cite{Kazakov} in the context of quantum theory. If true, this result would show that the surface area discrepancy is equivalent to the volume discrepancy when the observed polyhedron has the same combinatorial type as the  global maximizer. 

\section*{ACKNOWLEDGMENTS}

The authors express their sincerest gratitude to Shiri Artstein-Avidan and Florian Besau for their remarks, and to the anonymous referees for carefully reading our paper and providing  helpful comments and constructive feedback. The first two named authors thank the Perspectives on Research In Science \& Mathematics (PRISM) program at Longwood University for its  support. The third named author has received funding from the European Research Council (ERC) under the European Union’s Horizon 2020 research and innovation programme (grant agreement No 770127). 

	\bibliographystyle{acm}
\bibliography{bibloDHL-final}

	\vspace{3mm}

\noindent {\sc Department of Mathematics \& Computer Science, Longwood University, 23909}

\noindent {\it E-mail address:} {\tt jessica.donahue@live.longwood.edu}

\vspace{2mm}

\noindent {\sc Department of Mathematics \& Computer Science, Longwood University, 23909}

\noindent {\it E-mail address:} {\tt hoehnersd@longwood.edu}

\vspace{2mm}

\noindent {\sc School of Mathematics and Statistics, Ningbo University, Ningbo 315211, P.R. China}

\noindent {\it E-mail address:} {\tt bxl292@case.edu}

\end{document}